\newtheorem{theorem}{Theorem}[section]
\newtheorem{definition}[theorem]{Definition}
\newtheorem{proposition}[theorem]{Proposition}
\newtheorem{example}[theorem]{Example}
\newtheorem{corollary}[theorem]{Corollary}
\newtheorem{remark}[theorem]{Remark}
\def\<{\langle}
\def\>{\rangle}
\def\tl{\triangleleft}
\def\tr{\triangleright}
\def\D{\Delta}
\def\l{\ltimes}
\def\o{\otimes}
\def\r{\rho}
\def\na{\natural}
\date{}
\begin{document}
\renewcommand{\baselinestretch}{1.2}
\renewcommand{\arraystretch}{1.0}
\title{\bf Constructions of Rota-Baxter operators by L-R smash products}
 \date{}
\author {{\bf Daowei Lu \footnote {Corresponding author: ludaowei620@126.com.}, Dingguo Wang}\\
{\small $^1$School of Mathematics and Big Data, Jining University}\\
{\small Qufu, Shandong 273155, P. R. China}\\
{\small $^2$School of Mathematical Sciences, Qufu Normal University}\\
{\small Qufu, Shandong 273165, P. R. China}
}
 \maketitle
\begin{center}
\begin{minipage}{12.cm}

\noindent{\bf Abstract.} Let $A$ and $H$ be two cocommutative Hopf algebras such that $A$ is an $H$-bimodule Hopf algebra. Suppose that $R:A\rightarrow A$ is a linear map and $B$ is a  Rota-Baxter operator of $H$. In this paper we will characterize the Rota-Baxter operators on the L-R smash product $A\na H$ and give the necessary and sufficient conditions to make $\overline{B}$ a Rota-Baxter operator of $A\na H$. Then we will consider the dual case, and construct a Rota-Baxter co-operator on the L-R smash coproduct $C\ltimes H$, where $C$ and $H$ are commutative Hopf algebras and $C$ is an $H$-bicomodule Hopf algebra.
\\

\noindent{\bf Keywords:} Rota-Baxter (co-)operator; Rota-Baxter Hopf algebra; L-R smash (co-)product.
\\

\noindent{\bf  Mathematics Subject Classification:} 16T05, 17B38.
 \end{minipage}
 \end{center}
 \normalsize\vskip1cm

\section*{Introduction}
\setcounter{equation} {0} \hskip\parindent

Rota-Baxter operators for associative algebras was firstly introduced by G. Baxter in \cite{Bax} as a tool for studying integral operators in the theory of probability and mathematical statistics. An algebra endowed with a Rota-Baxter operator is called a Rota-Baxter algebra, which has been
related to many topics in mathematical physics. For example, the Rota-Baxter operator on an associative algebra satisfies the famous (operator form of) classical Yang-Baxter equation on the Lie algebra which is the commutator
of the associative algebra\cite{Ag1,Ag2,Ag3}. Rota-Baxter algebras appeared in connection with the work of Connes and Kreimer on renormalization theory in perturbative quantum field theory \cite{CK1,CK2}. It is also related to Loday’s dendriform algebras \cite{Lo, LR}.

The notion of Rota-Baxter operators on Lie algebras was introduced independently by A.A. Belavin, V.G. Drinfeld \cite{BeDr} and M.A. Semenov-Tyan-Shanskii \cite{STS} in their research of the solutions of the classical Yang-Baxter equation. Let $(L,[,])$ be a Lie algebra. A Rota-Baxter operators of weight 1 on $L$ is a linear map $B:L\rightarrow L$ such that for all $x,y\in L,$
$$[B(x),B(y)]=B([x,B(y)]+[B(x),y]+[x,y]).$$

It turns out that on quadratic Lie algebras skew-symmetric solutions of the classical Yang-Baxter equation are in one to one correspondence with skew-symmetric Rota-Baxter operators.

Recently the notion of Rota-Baxter operators on groups was introduced in \cite{GLS}. Let $G$ be a group, a map $B:G\rightarrow G$ is called a Rota-Baxter operator of $G$ if for all $g,h\in G$, 
$$B(g)B(h)=B(gB(g)hB(g)^{-1}).$$

A group $G$ with a Rota-Baxter operator $B$ is called a Rota-Baxter group.  It was pointed out in \cite{GLS} that if $(G, B)$ is a Rota-Baxter Lie group, then the tangent map of $B$ at the identity is a Rota-Baxter operator of weight 1 on the Lie algebra of the Lie group $G$. Later in \cite{Gon} the notion of  Rota-Baxter group was naturally generalized to the notion of Rota-Baxter Hopf algebra, which is a cocommutative Hopf algebra $H$ with a coalgebra map $B:H\rightarrow H$ satisfying for all $g,h\in H$,
$$B(g)B(h)=B(g_1B(g_2)hS(B(g_3))).$$ 

Since any pointed cocommutative Hopf algebra $H$ over an algebraically closed field $F$ is isomorphic to the smash product $U(L)\#F[G]$, where $G$ is a group, and $U(L)$ the universal enveloping algebra of a Lie algebra $L$. Inspired by this result, in \cite{Zhu} the authors presented how to construct Rota-Baxter operators on cocommutative smash product  Hopf algebras. Motivated by this result, since L-R smash product is an important generalization of smash product, in this paper we will construct Rota-Baxter operators through the L-R smash product Hopf algebras. Explicitly let $A$ and $H$ be two cocommutative Hopf algebra such that $(A,\tl,\tr)$ is an $H$-bimodule Hopf algebra. Assume that $R:A\rightarrow A$ is a linear map and $B$ is a Rota-Baxter operator on $H$. Define a linear map $\overline{B}:A\na H\rightarrow A\na H$ by
$$\overline{B}(a\na h)=B(h_1)\tr R(a)\tl B(h_2)\na B(h_3),$$
for all $a\in A,h\in H$. We will give the  necessary and sufficient conditions to make $\overline{B}$ a Rota-Baxter operator of $A\na H$. Finally assume that $C$ and $H$ are commutative Hopf algebras and $C$ is an $H$-bicomodule Hopf algebra. We will examine the dual case and characterize the Rota-Baxter co-operator on  the L-R smash coproduct Hopf algebras $C\ltimes H$.

The paper is organized as follows. In Section 1 we recall some basic and necessary definitions and results on L-R smash product and smash coproduct. In Section 2 we investigate how to construct Rota-Baxter operators on L-R smash product  Hopf algebras and give the necessary and sufficient conditions to describe the Rota-Baxter operators on L-R smash product. In section 3, we will study the dual case and give the  necessary and sufficient conditions to construct a Rota-Baxter co-operator on  L-R smash coproduct  Hopf algebras.

\section{Preliminaries}
\def\theequation{1.\arabic{equation}}
\setcounter{equation} {0} 

Throughout this paper, let $k$ be a fixed field, and all vector spaces and tensor product are over $k$. For a coalgebra $C$, we will use the Heyneman-Sweedler's notation $\Delta(c)=  c_{1}\otimes c_{2},$ for any $c\in C$ (summation omitted).

Let $H$ be a bialgebra and $A$ a bialgebra. Then $A$ is called an $H$-bimodule bialgebra if $A$ is an $H$-bimodule with the left action $\tr:H\o A\rightarrow A$ and the right action $\tl:A\o H\rightarrow A$ such that for all $h\in H,a,b\in A$,
\begin{align*}
\left.
 \begin{aligned}
 &h\tr (ab)=(h_1\tr a)(h_2\tr b),\ h\tr 1_A=\varepsilon(h)1_A, \\
 &(ab)\tl h=( a\tl h_1)(b\tl h_2),\ 1_A\tl h=\varepsilon(h)1_A,
 \end{aligned}
 \right\}&  \text{bimodule algebra}\\
 \left.
 \begin{aligned}
 &(h\tr a)_1\o (h\tr a)_2=h_1\tr a_1\o h_2\tr a_2,\\
&(a\tl h)_1\o (a\tl h)_2= a_1\tl h_1\o  a_2\tl h_2,\\
&\varepsilon(h\tr a)=\varepsilon(a\tl h)=\varepsilon(a)\varepsilon(h).
 \end{aligned}
 \right\}&  \text{bimodule coalgebra}
\end{align*}

Let $(H,S_H)$ be a Hopf algebra and $A$ an $H$-bimodule algebra. Then we have the L-R smash product $A\na H$ (which is equal to $A\o H$ as a vector space) with the following multiplication
$$(a\na h)(b\na g)=(a\tl g_2)(h_1\tr b)\na h_2g_1,$$
and the unit $1_A\na 1_H$ for all $a,b\in A,g,h\in H$. Furthermore if $(A,S_A)$ is a Hopf algebra and $A$ is an $H$-bimodule bialgebra, then $A\na H$ becomes a Hopf algebra with the comultiplication, counit and antipode given by
\begin{align*}
&\D(a\na h)=a_1\na h_1\o a_2\na h_2,\\
& \varepsilon(a\na h)=\varepsilon_A(a)\varepsilon_H(h),\\
&S(a\na h)=S_H(h_3)\tr S_A(a)\tl S_H(h_2)\na S_H(h_1),
\end{align*}
under the following conditions
\begin{align}
h_1\tr a\o h_2=h_2\tr a\o h_1,\label{1a}\\
a\tl h_1\o h_2=a\tl h_2\o h_1.\label{1b}
\end{align}
Note that when $H$ is cocommutative, the identities (\ref{1a}) and (\ref{1b}) naturally hold.

We call $A$ an $H$-bimodule Hopf algebra if $S_A$ is $H$-bilinear.

A bialgebra $C$ is called an $H$-bicomodule bialgebra if $C$ is an $H$-bicomodule with the left coaction $\r^l:C\rightarrow H\o C, \ c\mapsto c_{(-1)}\o c_{(0)}$ and the right coaction $\r^r:C\rightarrow C\o H, \ c\mapsto c_{[0]}\o c_{[1]}$ such that for all $c\in C$,
\begin{align*}
\left.
 \begin{aligned}
&(ab)_{(-1)}\o(ab)_{(0)}=a_{(-1)}b_{(-1)}\o a_{(0)}b_{(0)},\\
&(ab)_{[0]}\o(ab)_{[1]}=a_{[0]}b_{[0]}\o a_{[1]}b_{[1]},\\
&1_{A(-1)}\o 1_{A(0)}=1_H\o1_A,\ 1_{A[0]}\o 1_{A[1]}=1_A\o1_H,
 \end{aligned}
 \right\}&  \text{bicomodule algebra}\\
\left.
 \begin{aligned}
&c_{(-1)}\o c_{(0)1}\o c_{(0)2}=c_{1(-1)}c_{2(-1)}\o c_{1(0)}\o c_{2(0)},\\
&c_{[0]1}\o c_{[0]2}\o c_{[1]}=c_{1[0]}\o c_{2[0]}\o c_{1[1]}c_{2[1]},\\
&\varepsilon(c_{(0)})c_{(-1)}=\varepsilon(c)1_H,\ \varepsilon(c_{[0]})c_{[1]}=\varepsilon(c)1_H.
 \end{aligned}
 \right\}&  \text{bicomodule coalgebra}
\end{align*}

Assume that a Hopf algebra $(C,S_C)$ is an $H$-bicomodule bialgebra and denote $C\o H$ by $C\ltimes H$ and the elements $c\o h$ by $c\ltimes h$. Furthermore assume that 
\begin{align}
&c_{(-1)}h\o c_{(0)}=hc_{(-1)}\o c_{(0)},\label{1c}\\
&c_{[0]}\o c_{[1]}h=c_{[0]}\o hc_{[1]},\label{1d}
\end{align}
for all $c\in C,h\in H$. Then $C\ltimes H$ becomes a Hopf algebra (called L-R smash coproduct) with the following structures:
\begin{align*}
&(c\ltimes g)(d\ltimes h)=cd\ltimes gh,\ 1_{C\ltimes H}=1_C\ltimes 1_H,\\
&\Delta(c\ltimes h)=(c_{1[0]}\ltimes c_{2(-1)}h_1)\o(c_{2(0)}\ltimes h_2c_{1[1]}),\ \varepsilon(c\ltimes h)=\varepsilon(c)\varepsilon(h)\\
&S(c\ltimes h)=S_C(c_{(0)[0]})\ltimes S_H(c_{(-1)}c_{(0)[1]}h).
\end{align*}
Note that when $H$ is commutative, the identities (\ref{1c}) and (\ref{1d}) naturally hold. We call $C$ an $H$-bicomodule Hopf algebra if $S_C$ is $H$-bicolinear.

\section{Rota-Baxter operator on L-R smash product}
\def\theequation{2.\arabic{equation}}
\setcounter{equation} {0} 

In this section, we will construct a Rota-Baxter operator on L-R smash product, and give the necessary and sufficient conditions.

\begin{definition}\cite{Gon} Let $(H,S)$ be a cocommutative Hopf algebra. A coalgebra map $B:H\rightarrow H$ is called a Rota-Baxter operator on $H$ if for all $x,y\in H$,
$$B(x)B(y)=B(x_1B(x_2)yS(B(x_3))).$$
\end{definition}

\begin{theorem}
Let $A$ and $H$ be two cocommutative Hopf algebra such that $(A,\tl,\tr)$ is an $H$-bimodule Hopf algebra. Assume that $R:A\rightarrow A$ is a linear map and $B$ is a Rota-Baxter operator on $H$. Define a linear map $\overline{B}:A\na H\rightarrow A\na H$ by
$$\overline{B}(a\na h)=B(h_1)\tr R(a)\tl B(h_2)\na B(h_3),$$
for all $a\in A,h\in H$. Then $\overline{B}$ is a Rota-Baxter operator on $A\na H$ if and only if $R$ is a Rota-Baxter operator on $A$ and satisfies the following conditions
\begin{align}
&R[(a_1R(a_2)\tl h_1)b(h_2\tr S_A(R(a_3)))]=(S_H(B(h))\tr R(a))R(b),\label{2a}\\
&R(g_{1}B(g_2)\tr b\tl S_H(B(g_3)))=R(b)\tl S_H(B(g)).\label{2b}
\end{align}
\end{theorem}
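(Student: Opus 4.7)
The plan is first to dispose of the coalgebra-map requirement and then attack the Rota-Baxter identity. Once $R$ is known to be a coalgebra map (which is built into being a Rota-Baxter operator on $A$), the bimodule coalgebra axioms on $A$, the fact that $B$ is a coalgebra map, and the cocommutativity of $H$ immediately give that $\overline{B}$ is a coalgebra map on $A\na H$. Thus the real content of the theorem is the Rota-Baxter identity
$$\overline{B}(x)\overline{B}(y)=\overline{B}\bigl(x_1\overline{B}(x_2)yS(\overline{B}(x_3))\bigr),$$
which must hold for all $x=a\na g$ and $y=b\na h$ in $A\na H$.

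For sufficiency I would expand both sides of this identity using the L-R smash multiplication $(u\na s)(v\na t)=(u\tl t_2)(s_1\tr v)\na s_2 t_1$, the comultiplication $\Delta(u\na s)=u_1\na s_1\o u_2\na s_2$, and the antipode formula for $A\na H$. The expressions involve long strings of Sweedler indices, but cocommutativity of $H$ allows free permutation, and the bimodule compatibility $h\tr(c\tl k)=(h\tr c)\tl k$ lets one group the two actions cleanly. Each side then splits naturally into an $A$-component and an $H$-component. The $H$-component, after the $\varepsilon$-contractions created by the counit parts of the $R$-arguments, reduces to $B(g)B(h)=B(g_1B(g_2)hS(B(g_3)))$, which holds because $B$ is a Rota-Baxter operator on $H$.

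The $A$-component is the substantive part. Using $H$-bilinearity of $S_A$ and the bimodule axioms, condition (\ref{2a}) rewrites the product $R(a)\cdot(\cdots)$ coming from $\overline{B}(x)\overline{B}(y)$ into a single $R$-applied-to-a-product; condition (\ref{2b}) then absorbs the $S(\overline{B}(x_3))$ contribution on the right-hand side, stripping away the intrusive $S_H(B(g_i))$-factors; and what remains is precisely the Rota-Baxter identity $R(a)R(b)=R(a_1R(a_2)bS_A(R(a_3)))$ for $R$ on $A$. Combining the $A$- and $H$-parts delivers the required identity for $\overline{B}$. For necessity I would specialise successively: (i) taking $a=1_A$ and $g=1_H$ (so the $B(g_i)$-factors collapse to $1_H$, since $B(1_H)$ is a grouplike idempotent and hence equal to $1_H$) degenerates the $\overline{B}$-identity into the Rota-Baxter identity for $R$ on $A$; (ii) taking $a=1_A$ and keeping $g$ generic removes $R(a)$ from the picture and, after matching $A$-components, isolates (\ref{2b}); (iii) taking $g=1_H$ and keeping $a$ generic symmetrically isolates (\ref{2a}). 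Thus the three hypotheses are forced by $\overline{B}$ being Rota-Baxter.

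The main obstacle I anticipate is the combinatorial bookkeeping: $\overline{B}$ already uses three Sweedler components of its input, and the comultiplication plus antipode of $A\na H$ introduce several more on each side, so each side of the Rota-Baxter identity carries roughly a dozen Sweedler indices. Cleanly separating the $A$- and $H$-components, correctly aligning them via cocommutativity of $H$, and recognising the exact sub-expressions where (\ref{2a}), (\ref{2b}) and the Rota-Baxter identity on $R$ must be applied is where patience is required; all other steps are routine uses of the bimodule Hopf algebra axioms and $H$-bilinearity of $S_A$.
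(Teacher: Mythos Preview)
Your overall strategy matches the paper's: expand both sides of the Rota--Baxter identity for $\overline{B}$, reduce to a single master equation in $A$ (the paper's equation (\ref{2c}), obtained after applying $id\otimes\varepsilon$), and then specialise to extract the three conditions.

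There is, however, a concrete error in your necessity argument. In step (i) you set $a=1_A$ and $g=1_H$, claiming this yields the Rota--Baxter identity for $R$. It does not: with $a=1_A$ the only instance of $R$ on the first argument is $R(1_A)$, so you get an identity relating $R(1_A)$ and $R(b)$, not $R(a)R(b)=R(a_1R(a_2)bS_A(R(a_3)))$ for general $a$. The correct route (as in the paper) is to set $g=1_H$ first, which gives (\ref{2a}); then setting $h=1_H$ in (\ref{2a}) yields the Rota--Baxter identity for $R$. Only \emph{after} this can one conclude $R(1_A)=1_A$ (grouplike idempotent, exactly as you argue for $B(1_H)$), and then setting $a=1_A$ together with $h=1_H$ gives (\ref{2b}). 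Your step (ii) likewise needs $h=1_H$, not just $a=1_A$. The ordering matters.

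For sufficiency your plan is right in spirit but skips a non-obvious step. Before (\ref{2a}) and (\ref{2b}) can be brought to bear on the $A$-component, one needs the auxiliary identity
\[
R(a)\tl B(g)=R\bigl(S_H(g_1B(g_2))\tr a\tl B(g_3)\bigr),
\]
which the paper derives from (\ref{2b}) as its equation (\ref{2d}). Without it the terms $R(a_2)\tl B(g_j)$ sitting inside the long $R[\cdots]$ cannot be recast into the shape required to invoke (\ref{2a}). This is the spot where the difficulty you anticipate becomes a genuine algebraic obstruction rather than mere bookkeeping; the paper's derivation of (\ref{2d}) is short but essential.
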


\begin{proof}
For all $a,b\in A,g,h\in H$, we have 
\begin{align*}
  &\overline{B}(a\na g) \overline{B}(b\na h) \\
  &=[B(g_1)\tr R(a)\tl B(g_2)\na B(g_3)][B(h_1)\tr R(b)\tl B(h_2)\na B(h_3)] \\
  &=(B(g_1)\tr R(a)\tl B(g_2)B(h_1))(B(g_3) B(h_2)\tr R(b)\tl B(h_3))\na B(g_4)B(h_4),
\end{align*}
and 
\begin{align*}
&\overline{B}[(a_1\na g_1)\overline{B}(a_2\na g_2)(b\na h)S(\overline{B}(a_3\na g_3))]\\
&=\overline{B}[(a_1\na g_1)(B(g_2)\tr R(a_2)\tl B(g_3)\na B(g_4))(b\na h)S(B(g_5)\tr R(a_3)\tl B(g_6)\na B(g_7))]\\
&=\overline{B}[(a_1\na g_1)(B(g_2)\tr R(a_2)\tl B(g_3)\na B(g_4))(b\na h)\\
&\quad(S_H(B(g_7))B(g_5)\tr S_A(R(a_3))\tl B(g_6)S_H(B(g_8)))\na S_H(B(g_9))]\\
&=\overline{B}[(a_1\na g_1)(B(g_2)\tr R(a_2)\tl B(g_3)\na B(g_4))(b\na h)(S_A(R(a_3))\na S_H(B(g_5)))]\\
&=\overline{B}[((a_1\tl B(g_6))(g_{1}B(g_2)\tr R(a_2)\tl B(g_3))\na g_{4}B(g_5)))(b\na h)(S_A(R(a_3))\na S_H(B(g_7)))]\\
&=\overline{B}[((a_1\tl B(g_8)h_1)(g_{1}B(g_2)\tr R(a_2)\tl B(g_3)h_2)(g_{4}B(g_5)\tr b)\na  g_{6}B(g_7)h_3)\\
&\quad(S_A(R(a_3))\na S_H(B(g_9)))]\\
&=\overline{B}[((a_1\tl B(g_{10})h_1S_H(B(g_{11})))(g_{1}B(g_2)\tr R(a_2)\tl B(g_3)h_2S_H(B(g_{12})))\\
&\quad(g_{4}B(g_5)\tr b
\tl S_H(B(g_{13})))(g_{6}B(g_7)h_3\tr S_A(R(a_3)))\na  g_{8}B(g_9)h_4S_H(B(g_{14}))]\\
&=B(g_{8}B(g_9)h_4S_H(B(g_{18})))\tr R[(a_1\tl B(g_{14})h_1S_H(B(g_{15})))(g_{1}B(g_2)\tr R(a_2)\tl B(g_3)h_2S_H(B(g_{16})))\\
&\quad(g_{4}B(g_5)\tr b
\tl S_H(B(g_{17})))(g_{6}B(g_7)h_3\tr S_A(R(a_3)))]\tl B(g_{10}B(g_{11})h_5S_H(B(g_{19})))\\
&\quad\na  B(g_{12}B(g_{13})h_6S_H(B(g_{20})))\\
&=B(g_{8}B(g_9)h_4S_H(B(g_{10})))\tr R[(a_1\tl B(g_{17})h_1S_H(B(g_{18})))(g_{1}B(g_2)\tr R(a_2)\tl B(g_3)h_2S_H(B(g_{19})))\\
&\quad(g_{4}B(g_5)\tr b
\tl S_H(B(g_{20})))(g_{6}B(g_7)h_3\tr S_A(R(a_3)))]\tl B(g_{11}B(g_{12})h_5S_H(B(g_{13})))\\
&\quad\na  B(g_{14}B(g_{15})h_6S_H(B(g_{16})))\\
&=B(g_{8})B(h_4)\tr R[(a_1\tl B(g_{11})h_1S_H(B(g_{12})))(g_{1}B(g_2)\tr R(a_2)\tl B(g_3)h_2S_H(B(g_{13})))\\
&\quad(g_{4}B(g_5)\tr b
\tl S_H(B(g_{14})))(g_{6}B(g_7)h_3\tr S_A(R(a_3)))]\tl B(g_{9})B(h_5))\na  B(g_{10})B(h_6)\\
&=B(g_{1})B(h_1)\tr R[(a_1\tl B(g_{2})h_2S_H(B(g_{3})))(g_{4}B(g_5)\tr R(a_2)\tl B(g_6)h_3S_H(B(g_{7})))\\
&\quad(g_{8}B(g_9)\tr b
\tl S_H(B(g_{10})))(g_{11}B(g_{12})h_4\tr S_A(R(a_3)))]\tl B(g_{13})B(h_5))\na  B(g_{14})B(h_6).
\end{align*}
$(\Rightarrow):$ Assume that $\overline{B}$ is a Rote-Baxter operator on $A\na H$. Since $\overline{B}$ is a coalgebra map, we have that 
$$\D(\overline{B}(a\na1))=\overline{B}(a_1\na1)\overline{B}(a_2\na1).$$
Hence $\D(R(a))=R(a_1)\o R(a_2)$, which means that $R$ is a coalgebra map. And
\begin{align*}
&B(g_{1})B(h_1)\tr R[(a_1\tl B(g_{2})h_2S_H(B(g_{3})))(g_{4}B(g_5)\tr R(a_2)\tl B(g_6)h_3S_H(B(g_{7})))\\
&(g_{8}B(g_9)\tr b
\tl S_H(B(g_{10})))(g_{11}B(g_{12})h_4\tr S_A(R(a_3)))]\tl B(g_{13})B(h_5))\na  B(g_{14})B(h_6)\\
&=(B(g_1)\tr R(a)\tl B(g_2)B(h_1))(B(g_3) B(h_2)\tr R(b)\tl B(h_3))\na B(g_4)B(h_4),
\end{align*}
Applying $id\o \varepsilon$ to the above equation we have
\begin{align*}
&B(g_{1})B(h_1)\tr R[(a_1\tl B(g_{2})h_2S_H(B(g_{3})))(g_{4}B(g_5)\tr R(a_2)\tl B(g_6)h_3S_H(B(g_{7})))\\
&(g_{8}B(g_9)\tr b
\tl S_H(B(g_{10})))(g_{11}B(g_{12})h_4\tr S_A(R(a_3)))]\tl B(g_{13})B(h_5)\\
&=(B(g_1)\tr R(a)\tl B(g_2)B(h_1))(B(g_3) B(h_2)\tr R(b)\tl B(h_3)),
\end{align*}
then 
\begin{align*}
&B(h_1)\tr R[(a_1\tl B(g_{1})h_2S_H(B(g_{2})))(g_{3}B(g_4)\tr R(a_2)\tl B(g_5)h_3S_H(B(g_{6})))\\
&(g_{7}B(g_8)\tr b
\tl S_H(B(g_{9})))(g_{10}B(g_{11})h_4\tr S_A(R(a_3)))]\tl B(g_{12})\\
&=(R(a)\tl B(g))(B(h)\tr R(b)).
\end{align*}
This is equivalent to the following equation
\begin{align}\label{2c}
& R[(a_1\tl B(g_{1})h_1S_H(B(g_{2})))\nonumber\\
&(g_{3}B(g_4)\tr (((R(a_2)\tl B(g_5)h_2S_H(B(g_{6})))(b\tl S_H(B(g_{7})))(h_3\tr S_A(R(a_3)))))]\\
&=(S_H(B(h))\tr R(a))(R(b)\tl S_H(B(g))).\nonumber
\end{align}
Now take $g=1_H$ we obtain 
$$
R[(a_1R(a_2)\tl h_1)b(h_2\tr S_A(R(a_3)))]=(S_H(B(h))\tr R(a))R(b),
$$
Furthermore set $h=1_H$, we have
$$
R(a_1R(a_2)S_A(R(a_3)))=R(a)R(b),
$$
Again in the identity (\ref{2c}), let $a=1_A,h=1_H$ we get
$$R(g_{1}B(g_2)\tr b\tl S_H(B(g_3)))=R(b)\tl S_H(B(g)).$$

$(\Leftarrow):$ First of all, for $g\in H, a\in A$, we have
\begin{align*}
&R(g_1B(g_2)\tr a)\tl B(g_3)\\
&=R(g_1B(g_2)\tr (a\tl B(g_3))\tl S_H(B(g_4)))\tl B(g_5)\\
&\stackrel{(\ref{2b})}{=}R(a\tl B(g_1))\tl S_H(B(g_4))B(g_5)\\
&=R(a\tl B(g)),
\end{align*}
and hence
\begin{align*}
R(a)\tl B(g)&=R(g_1B(g_2)\tr (S_H(g_3B(g_4))\tr a))\tl B(g_5)\\
&=R(g_1B(g_2)\tr (S_H(g_4B(g_5))\tr a))\tl B(g_3)\\
&=R(S_H(g_1B(g_2))\tr a\tl B(g_3)),
\end{align*}
thus
\begin{equation}
R(a)\tl B(g)=R(S_H(g_1B(g_2))\tr a\tl B(g_3)).\label{2d}
\end{equation}

For all $a,b\in A,g,h\in H$, we only need to prove the identity (\ref{2c}).
\begin{align*}
& R[(a_1\tl B(g_{1})h_1S_H(B(g_{2})))\\
&(g_{3}B(g_4)\tr (((R(a_2)\tl B(g_5)h_2S_H(B(g_{6})))(b\tl S_H(B(g_{7})))(h_3\tr S_A(R(a_3))))))]\\
&=R[g_{1}B(g_2)\tr ((S_H(g_{3}B(g_4))\tr a_1\tl B(g_{5})h_1S_H(B(g_{6})))\\
&(R(a_2)\tl B(g_7)h_2S_H(B(g_{8})))(b\tl S_H(B(g_{9})))(h_3\tr S_A(R(a_3))))]\\
&=R[g_{1}B(g_2)\tr ((S_H(g_{4}B(g_5))\tr a_1\tl B(g_{6})h_1)\\
&(R(a_2)\tl B(g_7)h_2)b(h_3\tr S_A(R(a_3))\tl B(g_{8})))\tl S_H(B(g_{3}))]\\
&\stackrel{(\ref{2b})}{=}R[(S_H(g_{1}B(g_2))\tr a_1\tl B(g_{3})h_1)(\underline{R(a_2)\tl B(g_4)}h_2)b(h_3\tr S_A(R(a_3))\tl B(g_{5})))]\tl S_H(B(g_{6}))\\
&\stackrel{(\ref{2d})}{=}R[((S_H(g_{1}B(g_2))\tr a_1\tl B(g_{3}))\tl h_1)(R(S_H(g_{4}B(g_5))\tr a_2\tl B(g_6))\tl h_2)b\\
&(h_3\tr S_A(\underline{R(a_3)\tl B(g_{7})}))]\tl S_H(B(g_{8}))\\
&\stackrel{(\ref{2d})}{=}R[((S_H(g_{1}B(g_2))\tr a_1\tl B(g_{3}))R(S_H(g_{4}B(g_5))\tr a_2\tl B(g_6)))\tl h_1)b\\
&(h_2\tr S_A(R(S_H(g_{7}B(g_8))\tr a_3\tl B(g_{9}))))]\tl S_H(B(g_{10}))\\
&\stackrel{(\ref{2a})}{=}[(S_H(B(h))\tr R(S_H(g_{1}B(g_2))\tr a)\tl B(g_{3}))R(b)]\tl S_H(B(g_{4}))\\
&\stackrel{(\ref{2d})}{=}[(S_H(B(h))\tr R(a)\tl B(g_1))R(b)]\tl S_H(B(g_{2}))\\
&=(S_H(B(h))\tr R(a)) (R(b)\tl S_H(B(g_3)).
\end{align*}
The proof is completed.
\end{proof}

\begin{remark}
In the above theorem, when the right action is trivial, that is, $a\tl h=\varepsilon(h)a$ for all $a\in A,h\in H$, we could recover the main result Theorem 3.3 of \cite{Zhu}.
\end{remark}

\begin{corollary}
Let $A$ and $H$ be two cocommutative Hopf algebra such that $(A,\tl,\tr)$ is an $H$-bimodule Hopf algebra. Assume that $R:A\rightarrow A$ is a linear map. Define a linear map $\overline{B}:A\na H\rightarrow A\na H$ by
$$\overline{B}(a\na h)=S_H(h_1)\tr R(a)\tl S_H(h_2)\na S_H(h_3),$$
for all $a\in A,h\in H$. Then $\overline{B}$ is a Rota-Baxter operator on $A\na H$ if for all $a,b\in A,h\in H$, the  following equation holds
\begin{align}
&R[(a_1R(a_2)\tl h_1)b(h_2\tr S_A(R(a_3)))]=(h\tr R(a))R(b),\label{2e}\\
&R(a\tl h)=R(a)\tl h.\label{2f}
\end{align}

\end{corollary}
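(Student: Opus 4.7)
The plan is to deduce this corollary as a direct specialization of the preceding theorem by choosing $B=S_H$. First I would verify that the antipode of a cocommutative Hopf algebra is itself a Rota-Baxter operator on $H$: because $H$ is cocommutative, $S_H$ is a coalgebra map and $S_H^2=\mathrm{id}_H$, and the defining identity
\[S_H(x)S_H(y)=S_H\bigl(x_1 S_H(x_2)\,y\,S_H(S_H(x_3))\bigr)\]
collapses via $x_1 S_H(x_2)\o x_3=1_H\o x$ together with the anti-algebra property $S_H(yx)=S_H(x)S_H(y)$ to the tautology $S_H(x)S_H(y)=S_H(yx)$.

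Second, I would specialize the two conditions (\ref{2a}) and (\ref{2b}) of the theorem at $B=S_H$. Because $S_H\circ B=S_H^2=\mathrm{id}$, the right-hand side of (\ref{2a}) becomes $(h\tr R(a))R(b)$, so (\ref{2a}) reduces to exactly (\ref{2e}). For (\ref{2b}), the Sweedler factor $g_1 B(g_2)=g_1 S_H(g_2)=\varepsilon(g_1)1_H$ collapses the double coproduct, while $S_H(B(g_3))=g_3$, so after reindexing the left-hand side becomes $R(b\tl g)$ and the right-hand side becomes $R(b)\tl g$, which is precisely (\ref{2f}).

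Finally, setting $h=1_H$ in (\ref{2e}) and using $1_H\tr x=x$ and $x\tl 1_H=x$ recovers the Rota-Baxter identity $R(a_1 R(a_2) S_A(R(a_3)))=R(a)R(b)$ for $R$ on $A$. Together with the tacit assumption that $R$ is a coalgebra map (which is necessary for $\overline{B}$ to be a coalgebra map, as a direct expansion of $\Delta(\overline{B}(a\na h))$ using cocommutativity of $H$ shows), this says $R$ is a Rota-Baxter operator on $A$. The main theorem then applies with $B=S_H$ to conclude that $\overline{B}$ is a Rota-Baxter operator on $A\na H$. No deep obstacle arises; the only point requiring care is the Sweedler bookkeeping in the reduction of (\ref{2b}), where coassociativity is used to identify the collapsed double coproduct with the original element $g$.
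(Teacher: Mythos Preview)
Your proposal is correct and follows exactly the paper's intended route: specialize the main theorem to $B=S_H$, using that $S_H^2=\mathrm{id}$ on a cocommutative Hopf algebra so that $S_H$ is itself a Rota-Baxter operator and the conditions (\ref{2a}), (\ref{2b}) reduce to (\ref{2e}), (\ref{2f}). The paper's own proof is the single sentence ``since for a cocommutative Hopf algebra the square of the antipode coincides with the identity, the result is straightforward,'' so your write-up simply spells out the details (and correctly flags the tacit coalgebra-map hypothesis on $R$); note only the minor typo where $b$ is missing in your displayed Rota-Baxter identity for $R$.
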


\begin{proof}
Since for cocommutative Hopf algebra the square of the antipode coincides with the identity, the result is straightforward.
\end{proof}

\begin{corollary}
Let $A$ and $H$ be two cocommutative Hopf algebra such that $(A,\tl,\tr)$ is an $H$-bimodule Hopf algebra. Assume that $B$ is a Rota-Baxter operator on $H$. Define a linear map $\overline{B}:A\na H\rightarrow A\na H$ by
$$\overline{B}(a\na h)=B(h_1)\tr S_A(a)\tl B(h_2)\na B(h_3),$$
for all $a\in A,h\in H$. Then $\overline{B}$ is a Rota-Baxter operator on $A\na H$ if and only if  for all $a\in A,h\in H$, the following equations hold
\begin{align}
&h\tr S_A(a)=S_H(B(h))\tr S_A(a),\label{2e}\\
&h_{1}B(h_2)\tr a=\varepsilon(h)a.\label{2f}
\end{align}

\end{corollary}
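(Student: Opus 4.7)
The plan is to apply Theorem 2.2 with $R=S_A$. This reduces the corollary to three verifications: that $S_A$ is a Rota-Baxter operator on $A$, and that conditions (2.1) and (2.2) of the theorem, specialised to $R=S_A$, are equivalent to the two displayed identities of the corollary.

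Since $A$ is cocommutative, $S_A$ is a coalgebra map with $S_A^2=\mathrm{id}_A$. The expression $a_1S_A(a_2)bS_A^2(a_3)$ collapses to $ba$ using the antipode identity $a_1S_A(a_2)\otimes a_3=1_A\otimes a$ together with $S_A^2=\mathrm{id}$, and since $S_A$ is anti-multiplicative, $S_A(ba)=S_A(a)S_A(b)$, which verifies the Rota-Baxter axiom for $S_A$.

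For condition (2.1) with $R=S_A$, the same antipode cancellation combined with the bimodule-algebra identity $1_A\tl h_1=\varepsilon(h_1)1_A$ reduces $(a_1 S_A(a_2)\tl h_1)b(h_2\tr S_A^2(a_3))$ to $b(h\tr a)$. Since $A$ is an $H$-bimodule Hopf algebra, $S_A$ is $H$-bilinear, so the left-hand side of (2.1) simplifies to $(h\tr S_A(a))S_A(b)$; comparing with the right-hand side $(S_H(B(h))\tr S_A(a))S_A(b)$ and setting $b=1_A$ yields the first displayed equation, while the converse follows by right-multiplication by $S_A(b)$.

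For condition (2.2), $H$-bilinearity and bijectivity of $S_A$ (via $S_A^2=\mathrm{id}$) rewrite it as $g_1B(g_2)\tr a\tl S_H(B(g_3))=a\tl S_H(B(g))$. Assuming the corollary's second equation, this follows by regrouping the left-hand side as $g_1B(g_2)\tr(a\tl S_H(B(g_3)))$, using coassociativity to split $g_1\otimes g_2\otimes g_3=\Delta(g_{(1)})\otimes g_{(2)}$, and applying the hypothesis at $h=g_{(1)}$. The main obstacle is the converse: to recover $h_1B(h_2)\tr a=\varepsilon(h)a$, I would substitute $g\mapsto g_{(1)}$ in the displayed identity and then right-act by $\tl B(g_{(2)})$ on both sides. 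On the right, $S_H(B(g_{(1)}))B(g_{(2)})=\varepsilon(g)1_H$ via the antipode identity for $\Delta(B(g))=B(g_{(1)})\otimes B(g_{(2)})$, collapsing the right-hand side to $\varepsilon(g)a$. On the left, a parallel application of the same antipode identity, now to a further coproduct split of the third Sweedler index of $g$, contracts $S_H(B(g_3))B(g_4)$ to a counit scalar; after this contraction the remaining terms collapse to $g_1B(g_2)\tr a$ with $\Delta(g)=g_1\otimes g_2$, completing the equivalence. The delicate point is keeping the Sweedler indices aligned across the two coassociativity rearrangements and the two antipode-of-$B$ identities.
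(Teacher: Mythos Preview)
Your proposal is correct and follows exactly the intended route: the paper's own proof is simply ``The proof is straightforward and left to the reader,'' and the natural (and only reasonable) reading is to specialise Theorem~2.2 to $R=S_A$, which is precisely what you do. All of your reductions check out, including the converse for condition~(2.2): substituting $g\mapsto g_{(1)}$, right-acting by $B(g_{(2)})$, and using that $B$ is a coalgebra map (hence $\varepsilon\circ B=\varepsilon$) to collapse $S_H(B(g_3))B(g_4)$ to $\varepsilon(g_3)1_H$ is exactly right. Your write-up in fact supplies the details the paper omits.
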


\begin{proof}
The proof is straightforward and left to the reader.
\end{proof}

\begin{example}\label{2-6}
Let $C_2=\{1,g|g^2=1\}$ and $C_3=\{1,h|h^3=1\}$ be the cyclic group of order 2 and 3, respectively. Define the left and right actions of $k[C_2]$ on $k[C_3]$ as follows:
\begin{align*}
&g\tr h=h,\quad g\tr h^2=h^2,\\
&h\tl g=h^2,\quad h^2\tl g=h.
\end{align*}
It is straightforward to verify that $k[C_3]$ is a $k[C_2]$-bimodule Hopf algebra; hence we have the L-R smash product $k[C_3]\na k[C_2]$. And by a routine verification one could find that $k[C_3]\na k[C_2]\cong k[S_3]$ as Hopf algebras, where $S_3$ be the symmetric group on 3 symbols.
\begin{itemize}
  \item [(1)] Let $R:k[C_3]\rightarrow k[C_3]$ be the trivial Rota-Baxter operator, i.e., $R(h)=1$, then the equation (\ref{2d}) is satisfied, and 
  $$\overline{B}(h^i\na g)=S_H(g)\tr R(h^i)\tl S_H(g)\na S_H(g)=1\na g,\  i=1,2.$$
  \item [(2)] Define $R:k[C_3]\rightarrow k[C_3]$ by 
  $$R(h)=h^2,\ R(h^2)=h,$$
  then the equation (\ref{2d}) is also satisfied, and 
 \begin{align*}
&\overline{B}(h\na g)=g\tr R(h)\tl g\na g=g\tr h^2\tl g\na g=h\na g,\\
&\overline{B}(h^2\na g)=g\tr R(h)\tl g\na g=g\tr h\tl g\na g=h^2\na g,
 \end{align*}
which implies that $\overline{B}=id_{k[C_3]\na k[C_2]}$.
\end{itemize}
\end{example}

%
%
%

\section{Rota-Baxter co-operator on L-R smash coproduct}
\def\theequation{3.\arabic{equation}}
\setcounter{equation} {0} 

In this section we will consider the dual case and characterize a class of Rota-Baxter co-operator on L-R smash coproduct.

\begin{definition}\cite{Zheng}
Let $(H,S)$ be a commutative Hopf algebra. An algebra map $B : H\rightarrow H$ is called a Rota-Baxter co-operator on $H$ if for all $x\in H$,
\begin{equation}
B(x_1)\o B(x_2)=B(x)_1B(B(x)_2S(B(x)_4))\o B(x)_3.\label{3a}
\end{equation}
\end{definition}

\begin{proposition}
Let $H$ be a commutative Hopf algebra with a Rota-Baxter co-operator $B$, then $\varepsilon\circ B=\varepsilon$.
\end{proposition}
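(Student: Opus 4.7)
The plan is to apply $\varepsilon\otimes\varepsilon$ to both sides of the defining identity (\ref{3a}) and reinterpret the outcome as an identity in the convolution algebra $H^{*}$. The key structural observation is that $f:=\varepsilon\circ B: H\to k$ is an algebra map (composition of two algebra maps), so $f$ is a grouplike element of $H^{*}$ and is convolution-invertible with inverse $f\circ S$; indeed
$$(f*(f\circ S))(x)=\sum f(x_{1})f(S(x_{2}))=\sum f(x_{1}S(x_{2}))=\varepsilon(x)f(1)=\varepsilon(x).$$

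Applying $\varepsilon\otimes\varepsilon$ to the left-hand side $B(x_{1})\otimes B(x_{2})$ of (\ref{3a}) immediately gives $\sum f(x_{1})f(x_{2})=(f*f)(x)$ by multiplicativity of $\varepsilon$. Applying the same map to the right-hand side $B(x)_{1}B(B(x)_{2}S(B(x)_{4}))\otimes B(x)_{3}$, and using that both $\varepsilon$ and $f$ are algebra maps, yields
$$\sum \varepsilon(B(x)_{1})\,f(B(x)_{2})\,\varepsilon(B(x)_{3})\,f(S(B(x)_{4})).$$
The two loose counits $\varepsilon(B(x)_{1})$ and $\varepsilon(B(x)_{3})$ can now be absorbed into the fourfold coproduct of $B(x)$ via coassociativity and the counit axiom, successively reducing the sum to $\sum f(B(x)_{1})f(S(B(x)_{2}))=(f*(f\circ S))(B(x))=\varepsilon(B(x))=f(x)$.

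Equating the two reductions yields $f*f=f$ in $H^{*}$. Invoking the convolution inverse gives $f=f*f*(f\circ S)=f*(f\circ S)=\varepsilon$, which is exactly $\varepsilon\circ B=\varepsilon$. The only real care required in this approach is the bookkeeping for the two counit absorptions on the right-hand side; beyond that, the argument is purely formal manipulation in the convolution algebra $H^{*}$, so I expect no serious obstacle.
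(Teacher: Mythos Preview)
Your proof is correct and follows essentially the same route as the paper: apply $\varepsilon\otimes\varepsilon$ to the defining identity to obtain $f*f=f$ for $f=\varepsilon\circ B$, then use that $f$ is an algebra map (since $B$ is) to exhibit $f\circ S$ as its convolution inverse and cancel. The only cosmetic difference is that on the right-hand side the paper simplifies $\varepsilon\bigl(B(B(x)_{1}S(B(x)_{2}))\bigr)$ directly via the antipode identity $B(x)_{1}S(B(x)_{2})=\varepsilon(B(x))1$ and $B(1)=1$, whereas you first split with multiplicativity of $f$ and then recombine as $(f*(f\circ S))(B(x))$; both computations yield $f(x)$.
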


\begin{proof}
For all $x\in H$, applying $\varepsilon\o\varepsilon$ to both sides of (\ref{3a}), we have
$$\varepsilon(B(x_1))\varepsilon(B(x_2))=\varepsilon(B(x)_2S(B(x)_2))=\varepsilon(B(x)),$$
that is $(\varepsilon\circ B)\ast (\varepsilon\circ B)=\varepsilon\circ B$. Since $B(1)=1$, we get
$$\varepsilon(B(x_1))\varepsilon(B(S(x_2)))=\varepsilon(B(x_1S(x_2))=\varepsilon(x),$$
that is $(\varepsilon\circ B)\ast (\varepsilon\circ B\circ S)=\varepsilon$. Therefore  $\varepsilon\circ B=\varepsilon$.
\end{proof}

\begin{theorem}\label{3-1}
Let $C$ and $H$ be two commutative Hopf algebra such that $(C,\r^l,\r^r)$ is an $H$-bicomodule Hopf algebra. Assume that $R:C\rightarrow C$ is a linear map and $B$ is a Rota-Baxter co-operator on $H$. Define a linear map $\widetilde{B}:C\ltimes H\rightarrow C\ltimes H$ by
$$\widetilde{B}(c\ltimes h)=R(c_{(0)[0]})\ltimes B(c_{(-1)}c_{(0)[1]}h),$$
for all $c\in C,h\in H$. Then $\widetilde{B}$ is a Rota-Baxter co-operator on $C\ltimes H$ if and only if $R$ is a Rota-Baxter co-operator on $C$ and satisfies the following conditions
\begin{align}
&R(c)_{1[0]1}R(R(c)_{1[0]2}S_C(R(c)_{3(0)}))\o R(c)_2\o R(c)_{1[1]}R(c)_{3(-1)}\nonumber\\
&\ =R(c_{1(0)})\o R(c_2)\o B(S_H(c_{1(-1)})),\label{3c}\\
&R(c)_{[0](-1)1}B(R(c)_{[0](-1)2}S_H(R(c)_{[1]}))\o R(c)_{[0](0)}=B(S_H(c_{[1]}))\o R(c_{[0]}).\label{3d}
\end{align}
\end{theorem}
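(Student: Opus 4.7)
The plan is to dualize the argument used to prove Theorem 2.2, working entirely in terms of the comultiplication and coactions rather than the multiplication and actions. Concretely, I will compute both sides of the Rota-Baxter co-operator identity
\[
\widetilde{B}(x_1)\o\widetilde{B}(x_2)=\widetilde{B}(x)_1\widetilde{B}\bigl(\widetilde{B}(x)_2 S(\widetilde{B}(x)_4)\bigr)\o\widetilde{B}(x)_3
\]
for a general element $x=c\ltimes h$, using the formulas for the multiplication, comultiplication and antipode of $C\ltimes H$ together with the fact that $B$ is a Rota-Baxter co-operator on $H$. The bicomodule coalgebra axioms and commutativity of $H$ (which guarantees the compatibility conditions (\ref{1c})--(\ref{1d})) will be used repeatedly to re-order tensor factors and pull $B$ through products of the form $B(xy)=B(x)B(y)$, since $B$ is an algebra map.

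For the implication $(\Rightarrow)$, I will first set $h=1_H$ and apply $\varepsilon_H$ to the $H$-components of both sides to project the identity down to $C$; using that $\widetilde{B}$ is an algebra map (which forces $R$ to be an algebra map), this will yield the Rota-Baxter co-operator identity for $R$ on $C$. To extract (\ref{3c}), I will plug in suitable tensor-factor projections that kill the $h$-dependence coming from $B(\cdots h\cdots)$ while retaining the $c_{(-1)}$ and $c_{[1]}$ pieces; to extract (\ref{3d}) I will choose projections that isolate the right-coaction tensor leg. The analogue of the step at the end of the proof of Theorem 2.2 where one specializes $a=1_A$ or $h=1_H$ in (2.3) corresponds here to applying $\varepsilon$ to one tensorand of the master equation obtained after expansion.

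For $(\Leftarrow)$, I will establish, as a preliminary, a dual analogue of identity (\ref{2d}), namely an equation of the form
\[
B(S_H(c_{[1]}))\o R(c_{[0]})=R(c)_{[0](-1)1}B\bigl(R(c)_{[0](-1)2}S_H(R(c)_{[1]})\bigr)\o R(c)_{[0](0)},
\]
together with its left-coaction counterpart, by iterating the bicomodule Hopf algebra axioms and the Rota-Baxter co-operator identity for $B$. Substituting these, together with (\ref{3c}) applied to the $C$-leg and the Rota-Baxter property of $R$, into the expansion of the right-hand side of the $\widetilde{B}$ identity should reduce it to the left-hand side term by term.

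The main obstacle I anticipate is purely bookkeeping: the comultiplication on $C\ltimes H$ already produces four indices per factor through $c_{1[0]}$, $c_{2(-1)}$, $c_{2(0)}$, $c_{1[1]}$, and the Rota-Baxter co-operator identity further multiplies these by the four comultiplication legs of $\widetilde{B}(x)$, so both sides of the defining identity expand into sums with a large number of Sweedler/coaction indices. Controlling these while correctly applying the bicomodule coalgebra axioms (especially the cross-legged identities $c_{(-1)}\o c_{(0)1}\o c_{(0)2}=c_{1(-1)}c_{2(-1)}\o c_{1(0)}\o c_{2(0)}$ and its right-hand analogue) is where mistakes are easiest to make, and verifying that the conditions (\ref{3c})--(\ref{3d}) are exactly what is needed, neither more nor less, will require a careful parallel with the corresponding step in the primal proof.
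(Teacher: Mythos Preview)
Your proposal is correct and follows essentially the same route as the paper: expand both sides of the Rota-Baxter co-operator identity for $\widetilde{B}$ on $C\ltimes H$, reduce this to a single ``master'' equation, then for $(\Rightarrow)$ apply $\varepsilon$ to various tensor legs (and use that $\widetilde{B}$ is an algebra map to force $R$ multiplicative) to peel off the Rota-Baxter property of $R$ and conditions (\ref{3c}), (\ref{3d}); for $(\Leftarrow)$ derive auxiliary coaction identities from the hypotheses and substitute. One small point: the displayed equation you call ``a dual analogue of (\ref{2d})'' is in fact precisely hypothesis (\ref{3d}) itself, not something to be derived; the actual auxiliary identity the paper extracts from (\ref{3d}) and uses in the $(\Leftarrow)$ computation is
\[
R(c)_{(0)[0]}\otimes B(S_H(R(c)_{(-1)1}))\,S_H(R(c)_{(-1)2})\,B(R(c)_{(0)[1]})=R(c_{[0]})\otimes B(c_{[1]}),
\]
together with a companion identity obtained by combining (\ref{3d}) with the bicomodule axioms.
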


\begin{proof}
For all $c\in C,h\in H$, we firstly compute
\begin{align*}
&(c\ltimes h)_1\o(c\ltimes h)_2\o(c\ltimes h)_3\o(c\ltimes h)_4\\
&=(c_{1[0]}\ltimes c_{2(-1)}h_1)_1\o(c_{1[0]}\ltimes c_{2(-1)}h_1)_2\o(c_{2(0)}\ltimes h_2c_{1[1]})_1\o(c_{2(0)}\ltimes h_2c_{1[1]})_2\\
&=\underline{c_{1[0]1[0]}}\ltimes \underline{c_{1[0]2(-1)}}c_{2(-1)1}h_1\o\underline{ c_{1[0]2(0)}}\ltimes c_{2(-1)2}h_2\underline{c_{1[0]1[1]}}\\
&\quad\o c_{2(0)1[0]}\ltimes c_{2(0)2(-1)}h_3\underline{c_{1[1]1}}\o c_{2(0)2(0)}\ltimes h_4\underline{c_{1[1]2}} c_{2(0)1[1]}\\
&=c_{1[0][0]}\ltimes c_{2[0](-1)}c_{3(-1)1}h_1\o c_{2[0](0)}\ltimes c_{3(-1)2}h_2c_{1[0][1]}\\
&\quad\o c_{3(0)1[0]}\ltimes c_{3(0)2(-1)}h_3c_{1[1]1}c_{2[1]1}\o c_{3(0)2(0)}\ltimes h_4c_{1[1]2}c_{2[1]2} c_{3(0)1[1]}\\
&=c_{1[0]}\ltimes c_{2[0](-1)}c_{3(-1)1}c_{4(-1)1}h_1\o c_{2[0](0)}\ltimes c_{3(-1)2}c_{4(-1)2}h_2c_{1[1]1}\\
&\quad\o c_{3(0)[0]}\ltimes c_{4(0)(-1)}h_3c_{1[1]2}c_{2[1]1}\o c_{4(0)(0)}\ltimes h_4c_{1[1]3}c_{2[1]2} c_{3(0)[1]}\\
&=c_{1[0]}\ltimes c_{2[0](-1)}c_{3(-1)1}c_{4(-1)1}h_1\o c_{2[0](0)}\ltimes c_{3(-1)2}c_{4(-1)2}h_2c_{1[1]1}\\
&\quad\o c_{3(0)[0]}\ltimes c_{4(-1)3}h_3c_{1[1]2}c_{2[1]1}\o c_{4(0)}\ltimes h_4c_{1[1]3}c_{2[1]2} c_{3(0)[1]}.
\end{align*}
Then
\begin{align*}
&(c\ltimes h)_1\widetilde{B}((c\ltimes h)_2S((c\ltimes h)_4))\o(c\ltimes h)_3\\
&=(c_{1[0]}\ltimes c_{2[0](-1)}c_{3(-1)1}c_{4(-1)1}h_1)\widetilde{B}((c_{2[0](0)}\ltimes c_{3(-1)2}c_{4(-1)2}h_2c_{1[1]1})S( c_{4(0)}\ltimes h_4c_{1[1]3}c_{2[1]2} c_{3(0)[1]}))\\
&\quad\o  c_{3(0)[0]}\ltimes c_{4(-1)3}h_3c_{1[1]2}c_{2[1]1}\\
&=(c_{1[0]}\ltimes c_{2[0](-1)}c_{3(-1)1}c_{4(-1)1}h_1)\\
&\widetilde{B}( c_{2[0](0)}S_C(c_{4(0)[0]})\ltimes c_{3(-1)2}c_{4(-1)2}h_2c_{1[1]1}S_H(c_{4(-1)4}c_{4(0)[1]}h_4c_{1[1]3}c_{2[1]2} c_{3(0)[1]}))\\
&\quad\o  c_{3(0)[0]}\ltimes c_{4(-1)3}h_3c_{1[1]2}c_{2[1]1}\\
&=(c_{1[0]}\ltimes c_{2[0](-1)1}c_{3(-1)1}c_{4(-1)1}h_1)(R(c_{2[0](0)}S_C(c_{4(0)[0](0)}))\\
&\ltimes B(c_{2[0](-1)2}c_{4(0)[0](-1)}c_{2[1]1}c_{4(0)[1]1} c_{3(-1)2}c_{4(-1)2}h_2c_{1[1]1}S_H(c_{4(-1)4}c_{4(0)[1]2}h_4c_{1[1]3}c_{2[1]3} c_{3(0)[1]})))\\
&\quad\o  c_{3(0)[0]}\ltimes c_{4(-1)3}h_3c_{1[1]2}c_{2[1]2}\\
&=c_{1[0]}R(c_{2[0](0)}S_C(c_{4(0)}))\\
&\ltimes c_{2[0](-1)1}c_{3(-1)1}c_{4(-1)1}h_1 B(c_{2[0](-1)2}c_{3(-1)2}c_{4(-1)2}h_2c_{1[1]1}c_{2[1]1}S_H(h_4c_{1[1]3}c_{2[1]3} c_{3(0)[1]}))\\
&\quad\o  c_{3(0)[0]}\ltimes c_{4(-1)3}h_3c_{1[1]2}c_{2[1]2}\\
&=c_{1[0]1}R(c_{1[0]2(0)}S_C(c_{3(0)}))\\
&\quad\ltimes c_{1[0]2(-1)1}c_{2(-1)1}c_{3(-1)1}h_1 B(c_{1[0]2(-1)2}c_{2(-1)2}c_{3(-1)2}h_2c_{1[1]1}S_H(h_4c_{1[1]3} c_{2(0)[1]}))\\
&\quad\o  c_{2(0)1[0]}\ltimes c_{3(-1)3}h_3c_{1[1]2}\\
&=c_{1[0]1}R(c_{1[0]2(0)}S_C(c_{2(0)2(0)}))
\ltimes c_{1[0]2(-1)1}c_{2(-1)1}h_1 B(c_{1[0]2(-1)2}c_{2(-1)2}h_2c_{1[1]1}S_H(h_4c_{1[1]3} c_{2(0)1[1]}))\\
&\quad\o  c_{2(0)1[0]}\ltimes c_{2(0)2(-1)}h_3c_{1[1]2}\\
&\stackrel{h=B(x)}{=}c_{1[0]1}R(c_{1[0]2(0)}S_C(c_{2(0)2(0)}))
\ltimes c_{1[0]2(-1)1}c_{2(-1)1}B(x_1) B(c_{1[0]2(-1)2}c_{2(-1)2}c_{1[1]1}S_H(c_{1[1]3} c_{2(0)1[1]}))\\
&\quad\o  c_{2(0)1[0]}\ltimes c_{2(0)2(-1)}B(x_2)c_{1[1]2}.
\end{align*}
So on one hand
\begin{align*}
&\widetilde{B}(c\ltimes h)_1\widetilde{B}(\widetilde{B}(c\ltimes h)_2S(\widetilde{B}(c\ltimes h)_4))\o \widetilde{B}(c\ltimes h)_3\\
&=(R(c_{(0)[0]})\ltimes B(c_{(-1)}c_{(0)[1]}h))_1\widetilde{B}((R(c_{(0)[0]})\ltimes B(c_{(-1)}c_{(0)[1]}h))_2S((R(c_{(0)[0]})\ltimes B(c_{(-1)}c_{(0)[1]}h))_4))\\
&\quad\o (R(c_{(0)[0]})\ltimes B(c_{(-1)}c_{(0)[1]}h))_3\\
&=R(c_{(0)[0]})_{1[0]1}R(R(c_{(0)[0]})_{1[0]2(0)}S_C(R(c_{(0)[0]})_{2(0)2(0)}))\ltimes R(c_{(0)[0]})_{1[0]2(-1)1}R(c_{(0)[0]})_{2(-1)1}\\
&\quad\cdot B(c_{(-1)1}c_{(0)[1]1}h_1) B(R(c_{(0)[0]})_{1[0]2(-1)2}R(c_{(0)[0]})_{2(-1)2}R(c_{(0)[0]})_{1[1]1}S_H(R(c_{(0)[0]})_{1[1]3} R(c_{(0)[0]})_{2(0)1[1]}))\\
&\quad\o  R(c_{(0)[0]})_{2(0)1[0]}\ltimes R(c_{(0)[0]})_{2(0)2(-1)}B(c_{(-1)2}c_{(0)[1]2}h_2)R(c_{(0)[0]})_{1[1]2}.
\end{align*}

On the other hand,
\begin{align*}
&\widetilde{B}((c\ltimes h)_1)\o\widetilde{B}((c\ltimes h)_2)\\
&=\widetilde{B}(c_{1[0]}\ltimes c_{2(-1)}h_1)\o \widetilde{B}(c_{2(0)}\ltimes h_2c_{1[1]})\\
&=R(c_{1[0](0)[0]})\ltimes B(c_{1[0](-1)}c_{1[0](0)[1]}c_{2(-1)}h_1)\o R(c_{2(0)(0)[0]})\ltimes B(c_{2(0)(-1)}c_{2(0)(0)[1]}h_2c_{1[1]})\\
&=R(c_{1[0][0](0)})\ltimes B(c_{1[0][0](-1)}c_{1[0][1]}c_{2(-1)1}h_1)\o R(c_{2(0)[0]})\ltimes B(c_{2(-1)2}c_{2(0)[1]}h_2c_{1[1]})\\
&=R(c_{1[0](0)})\ltimes B(c_{1[0](-1)}c_{1[1]1}c_{2(-1)1}h_1)\o R(c_{2(0)[0]})\ltimes B(c_{2(-1)2}c_{2(0)[1]}h_2c_{1[1]2}).
\end{align*}
Therefore the identity $\widetilde{B}(c\ltimes h)_1\widetilde{B}(\widetilde{B}(c\ltimes h)_2S(\widetilde{B}(c\ltimes h)_4))\o \widetilde{B}(c\ltimes h)_3=\widetilde{B}((c\ltimes h)_1)\o\widetilde{B}((c\ltimes h)_2)$ implies
\begin{align*}
&R(c_{(0)[0]})_{1[0]1}R(R(c_{(0)[0]})_{1[0]2(0)}S_C(R(c_{(0)[0]})_{2(0)2(0)}))\ltimes R(c_{(0)[0]})_{1[0]2(-1)1}R(c_{(0)[0]})_{2(-1)1}\\
&\cdot B(c_{(-1)1}c_{(0)[1]1}h_1) B(R(c_{(0)[0]})_{1[0]2(-1)2}R(c_{(0)[0]})_{2(-1)2}R(c_{(0)[0]})_{1[1]1}S_H(R(c_{(0)[0]})_{1[1]3} R(c_{(0)[0]})_{2(0)1[1]}))\\
&\quad\o  R(c_{(0)[0]})_{2(0)1[0]}\ltimes R(c_{(0)[0]})_{2(0)2(-1)}B(c_{(-1)2}c_{(0)[1]2}h_2)R(c_{(0)[0]})_{1[1]2}\\
&=R(c_{1[0](0)})\ltimes B(c_{1[0](-1)}c_{1[1]1}c_{2(-1)1}h_1)\o R(c_{2(0)[0]})\ltimes B(c_{2(-1)2}c_{2(0)[1]}h_2c_{1[1]2}).
\end{align*}
By a long and tedious computation, the above identity is equivalent to
\begin{align}
&R(c)_{1[0]}R(R(c)_{2(0)1[0]})R(S_C(R(c)_{2(0)3(0)}))\nonumber\\
&\o B(R(c)_{1[1]1}) B(S_H(R(c)_{1[1]3})) R(c)_{2(-1)1} B(R(c)_{2(-1)2}) B(R(c)_{2(0)1[1]1})\nonumber\\
&B(S_H(R(c)_{2(0)1[1]3}))B(S_H(R(c)_{2(0)2[1]}))\nonumber\\
&\o R(c)_{2(0)2[0]}\o R(c)_{1[1]2} R(c)_{2(0)1[1]2} R(c)_{2(0)3(-1)}\label{3b}\\
&=R(c_{1(0)})\o B(S_H(c_{2[1]}))\o R(c_{2[0]})\o B(S_H(c_{1(-1)})).\nonumber
\end{align}

$(\Rightarrow):$ Assume that $\widetilde{B}$ is a Rota-Baxter co-operator on $C\ltimes H$. Then for all $c,d\in C$, we have 
\begin{align*}
\widetilde{B}((c\l 1)(d\l 1))&=R((cd)_{(0)[0]})\l B((cd)_{(-1)}(cd)_{(0)[1]})\\
&=R(c_{(0)[0]}d_{(0)[0]})\l B(c_{(-1)}d_{(-1)}c_{(0)[1]}d_{(0)[1]}),
\end{align*}
and 
$$\widetilde{B}(c\l 1)\widetilde{B}(d\l 1)=R(c_{(0)[0]})R(d_{(0)[0]})\l B(c_{(-1)}c_{(0)[1]})B(d_{(-1)}d_{(0)[1]}).$$
Since $\widetilde{B}$ is an algebra map, 
$$R(c_{(0)[0]}d_{(0)[0]})\l B(c_{(-1)}d_{(-1)}c_{(0)[1]}d_{(0)[1]})=R(c_{(0)[0]})R(d_{(0)[0]})\l B(c_{(-1)}c_{(0)[1]})B(d_{(-1)}d_{(0)[1]}).$$
Applying $id\o\varepsilon$ to the above identity we obtain $R(cd)=R(c)R(d)$, that is, $R$ is a morphism of algebra. 

Then apply $id\o\varepsilon\o id\o\varepsilon$ to (\ref{3b}) and let $h=1$, we have
$$
R(c)_{1}R(R(c)_{2}S_C(R(c)_{4}))\o  R(c)_{3}=R(c_{1})\o R(c_{2}).
$$
Hence $R$ is a Rota-Baxter co-operator on $C$.

Applying $id\o \varepsilon\o id\o id$ to (\ref{3b}) and setting $h=1$, we obtain
\begin{align*}
&R(c)_{1[0]} R(R(c)_{2[0]}) R(S_C(R(c)_{4(0)}))\o R(c)_{3}\o R(c)_{1[1]} R(c)_{2[1]} R(c)_{4(-1)}\\
&=R(c_{1(0)})\o R(c_{2})\o B(S_H(R(c_{1(-1)})))
\end{align*}
which is equivalent to
\begin{align*}
&R(c)_{1[0]1}R(R(c)_{1[0]2}S_C(R(c)_{3(0)}))\o R(c)_2\o R(c)_{1[1]}R(c)_{3(-1)}\\
&=R(c_{1(0)})\o R(c_{2})\o B(S_H(c_{1(-1)})).
\end{align*}
Again applying $\varepsilon\o id\o id\o\varepsilon$ to (\ref{3b}) and letting $h=1$, we have
\begin{align*}
&R(c)_{(-1)1}B(R(c)_{(-1)2})B(S_H(R(c)_{(0)[1]}))\o R(c)_{(0)[0]}\\
&=B(S_H(R(c_{[1]})))\o R(c_{[0]}),
\end{align*}
which is equivalent to the identity $(\ref{3d}).$

$(\Leftarrow):$ Assume that $R$ is a Rota-Baxter co-operator on $C$ and the identities (\ref{3c}) and (\ref{3d}) are satisfied. We only need to verify the equation (\ref{3b}). For all $c\in C,h\in H$, firstly we have
\begin{align*}
&R(c_{[0]})_{(-1)1}B(R(c_{[0]})_{(-1)2})B(c_{[1]})\o R(c_{[0]})_{(0)}\\
&=R(c_{[0]})_{[0][0](-1)1}B(R(c_{[0]})_{[0][1]})B(S_H(R(c_{[0]})_{[1]}))B(R(c_{[0]})_{[0][0](-1)2})B(c_{[1]})\o R(c_{[0]})_{[0][0](0)}\\
&=R(c_{[0]})_{[0](-1)1}B(R(c_{[0]})_{[0](0)[1]})B(S_H(R(c_{[0]})_{[1]}))B(R(c_{[0]})_{[0](-1)2})B(c_{[1]})\o R(c_{[0]})_{[0](0)[0]}\\
&\stackrel{(\ref{3d})}{=}B(S_H(c_{[0][1]}))B(R(c_{[0][0]})_{[1]})B(c_{[1]})\o R(c_{[0][0]})_{[0]}\\
&=B(R(c)_{[1]})\o R(c)_{[0]}.
\end{align*}
and 
\begin{align}
&R(c)_{(0)[0]}\o B(S_H(R(c)_{(-1)1})) S_H(R(c)_{(-1)2})B(R(c)_{(0)[1]})\nonumber\\
&=R(c)_{[0](0)}\o B(S_H(R(c)_{[0](-1)1})) S_H(R(c)_{[0](-1)2})B(R(c)_{[1]})\nonumber\\
&=R(c_{[0]})_{(0)}\o B(S_H(R(c_{[0]})_{(-1)3})) S_H(R(c_{[0]})_{(-1)4})
R(c_{[0]})_{(-1)1}B(R(c_{[0]})_{(-1)2})B(c_{[1]})\nonumber\\
&=R(c_{[0]})\o B(c_{[1]}).\label{3e}
\end{align}

Then
\begin{align*}
&R(c)_{1[0]}R(R(c)_{2(0)1[0]})R(S_C(R(c)_{2(0)3(0)}))\\
&\o B(R(c)_{1[1]1}) B(S_H(R(c)_{1[1]3})) R(c)_{2(-1)1} B(R(c)_{2(-1)2}) B(R(c)_{2(0)1[1]1})\\
&B(S_H(R(c)_{2(0)1[1]3}))B(S_H(R(c)_{2(0)2[1]}))\o R(c)_{2(0)2[0]}\o R(c)_{1[1]2} R(c)_{2(0)1[1]2} R(c)_{2(0)3(-1)}\\
&=R(c)_{1[0]}R(R(c)_{2(0)1[0]})R(S_C(R(c)_{2(0)3[0](0)}))
\o R(c)_{2(-1)1}B(R(c)_{2(-1)2})\\
&B(S_H(R(c)_{1[1]3})R(c)_{2(0)1[1]3})R(c)_{2(0)2[1]})R(c)_{2(0)3[1]2}))B(R(c)_{1[1]1})B(R(c)_{2(0)1[1]1})\cdot\\
&\cdot B(R(c)_{2(0)3[1]1})
\o R(c)_{2(0)2[0]}\o R(c)_{1[1]2}R(c)_{2(0)1[1]2}R(c)_{2(0)3[0](-1)}\\
&=R(c)_{1[0]}R(R(c)_{2(0)1[0]})R(S_C(R(c)_{2(0)3(0)[0]}))
\o R(c)_{2(-1)1}B(R(c)_{2(-1)2})\\
&B(S_H(R(c)_{1[1]3})R(c)_{2(0)1[1]3})R(c)_{2(0)2[1]})R(c)_{2(0)3(0)[1]2}))
B(R(c)_{1[1]1})B(R(c)_{2(0)1[1]1})\cdot\\
&\cdot B(R(c)_{2(0)3(0)[1]1})\o R(c)_{2(0)2[0]}\o R(c)_{1[1]2}R(c)_{2(0)1[1]2}R(c)_{2(0)3(-1)}\\
&=R(c)_{1[0]}R(R(c)_{2(0)1[0]})R(S_C(R(c)_{3(0)[0]}))
\o R(c)_{2(-1)1}R(c)_{3(-1)1}B(R(c)_{2(-1)2}R(c)_{3(-1)2})\\
&B(S_H(R(c)_{1[1]3})R(c)_{2(0)1[1]3})R(c)_{2(0)2[1]})R(c)_{3(0)[1]2}))
B(R(c)_{1[1]1})B(R(c)_{2(0)1[1]1})B(R(c)_{3(0)[1]1})\\
&\o R(c)_{2(0)2[0]}\o R(c)_{1[1]2}R(c)_{2(0)1[1]2}R(c)_{3(-1)3}\\
&=R(c)_{1[0]}R(R(c)_{2(0)[0]})R(S_C(R(c)_{4(0)[0]}))\\
&\o R(c)_{2(-1)1}R(c)_{3(-1)1}R(c)_{4(-1)1}B(R(c)_{2(-1)2}R(c)_{3(-1)2}R(c)_{4(-1)2})\cdot\\
&\cdot B(S_H(R(c)_{1[1]3})R(c)_{2(0)[1]3})R(c)_{3(0)[1]})R(c)_{4(0)[1]2}))
 B(R(c)_{1[1]1})B(R(c)_{2(0)[1]1})B(R(c)_{4(0)[1]1})\\
&\o R(c)_{3(0)[0]}\o R(c)_{1[1]2}R(c)_{2(0)[1]2}R(c)_{4(-1)3}\\
&=R(c)_{1[0](0)}R(R(c)_{2(0)[0][0]})R(S_C(R(c)_{4(0)[0](0)[0]}))\\
&\o \underline{R(c)_{1[0](-1)1}}R(c)_{2(-1)1}R(c)_{3(-1)1}R(c)_{4(-1)1}B(\underline{R(c)_{1[0](-1)2}}R(c)_{2(-1)2}R(c)_{3(-1)2}R(c)_{4(-1)2})\cdot\\
&\cdot B(S_H(R(c)_{1[1]3})R(c)_{2(0)[1]})R(c)_{3(0)[1]})R(c)_{4(0)[1]}))
\underline{S_H(R(c)_{1[0](-1)4})}B(\underline{S_H(R(c)_{1[0](-1)3}}))\cdot\\
&\cdot B(R(c)_{1[1]1})B(R(c)_{2(0)[0][1]1})B(R(c)_{4(0)[0](0)[1]})\\
&\o R(c)_{3(0)[0]}\o R(c)_{1[1]2}R(c)_{2(0)[0][1]2}R(c)_{4(0)[0](-1)}\\
&=R(c)_{1(0)[0](0)}R(R(c)_{2(0)[0][0]})R(S_C(R(c)_{4(0)[0](0)[0]}))\\
&\o R(c)_{1(-1)1}R(c)_{2(-1)1}R(c)_{3(-1)1}R(c)_{4(-1)1}B(R(c)_{1(-1)2}R(c)_{2(-1)2}R(c)_{3(-1)2}R(c)_{4(-1)2})\\
&B(S_H(R(c)_{1(0)[1]3})R(c)_{2(0)[1]})R(c)_{3(0)[1]})R(c)_{4(0)[1]}))
S_H(R(c)_{1(0)[0](-1)2})B(S_H(R(c)_{(0)1[0](-1)1}))\\
&B(R(c)_{1(0)[1]1})B(R(c)_{2(0)[0][1]1})B(R(c)_{4(0)[0](0)[1]})\\
&\o R(c)_{3(0)[0]}\o R(c)_{1(0)[1]2}R(c)_{2(0)[0][1]2}R(c)_{4(0)[0](-1)}\\
&=R(c)_{(0)1[0](0)}R(R(c)_{(0)2[0][0]})R(S_C(R(c)_{(0)4[0](0)[0]}))\\
&\o R(c)_{(-1)1}B(R(c)_{(-1)2})B(S_H(R(c)_{(0)1[1]3})R(c)_{(0)2[1]})R(c)_{(0)3[1]})R(c)_{(0)4[1]}))\\
&S_H(R(c)_{(0)1[0](-1)2})B(S_H(R(c)_{(0)[0]1[0](-1)1}))\\
&B(R(c)_{(0)1[1]1})B(R(c)_{(0)2[0][1]1})B(R(c)_{(0)4[0](0)[1]})\\
&\o R(c)_{(0)3[0]}\o R(c)_{(0)1[1]2}R(c)_{(0)2[0][1]2}R(c)_{(0)4[0](-1)}\\
&=R(c)_{(0)[0]1(0)[0]}R(R(c)_{(0)[0]2[0]})R(S_C(R(c)_{(0)[0]4(0)[0]}))\\
&\o R(c)_{(-1)1}B(R(c)_{(-1)2})B(S_H(R(c)_{(0)[1]}))S_H(R(c)_{(0)[0]1(-1)2})B(S_H(R(c)_{(0)[0]1(-1)1}))\\
&B(R(c)_{(0)[0]1(0)[1]1})B(R(c)_{(0)[0]2[1]1})B(R(c)_{(0)[0]4(0)[1]})\\
&\o R(c)_{(0)[0]3}\o R(c)_{(0)[0]1(0)[1]2}R(c)_{(0)[0]2[1]2}R(c)_{(0)[0]4(-1)}\\
&\stackrel{(\ref{3d})}{=}R(c_{[0]})_{1(0)[0]}R(\underline{R(c_{[0]})_{2[0][0]}})R(S_C(R(c_{[0]})_{4(0)[0]}))\\
&\o B(S_H(c_{[1]}))S_H(R(c_{[0]})_{1(-1)2})B(S_H(R(c_{[0]})_{1(-1)1}))\\
&B(R(c_{[0]})_{1(0)[1]1})\underline{B(R(c_{[0]})_{2[0][1]})}B(R(c_{[0]})_{4(0)[1]})\o R(c_{[0]})_{3}\\
&\o R(c_{[0]})_{1(0)[1]2}R(c_{[0]})_{2[1]}R(c_{[0]})_{4(-1)}\\
&\stackrel{(\ref{3e})}{=}R(c_{[0]})_{1(0)[0]}R(R(c_{[0]})_{2[0]})_{(0)[0]}\underline{R(S_C(R(c_{[0]})_{4(0)})_{[0]})}\\
&\o B(S_H(c_{[1]}))S_H(R(c_{[0]})_{1(-1)2})B(S_H(R(c_{[0]})_{1(-1)1}))B(R(c_{[0]})_{1(0)[1]1})\\
&B(S_H(R(R(c_{[0]})_{2[0]})_{(-1)1}))S_H(R(R(c_{[0]})_{2[0]})_{(-1)2})B(R(R(c_{[0]})_{2[0]})_{(0)[1]})\underline{B(S_C(R(c_{[0]})_{4(0)})_{[1]})}\\
&\o R(c_{[0]})_{3}\o R(c_{[0]})_{1(0)[1]2}R(c_{[0]})_{2[1]}R(c_{[0]})_{4(-1)}\\
&\stackrel{(\ref{3e})}{=}R(c_{[0]})_{1(0)[0]}R(R(c_{[0]})_{2[0]})_{(0)[0]}R(S_C(R(c_{[0]})_{4(0)}))_{(0)[0]}\\
&\o B(S_H(c_{[1]}))S_H(R(c_{[0]})_{1(-1)2})B(S_H(R(c_{[0]})_{1(-1)1}))B(R(c_{[0]})_{1(0)[1]1})\\
&B(S_H(R(R(c_{[0]})_{2[0]})_{(-1)1}))S_H(R(R(c_{[0]})_{2[0]})_{(-1)2})B(R(R(c_{[0]})_{2[0]})_{(0)[1]})\\
&B(S_H(R(S_C(R(c_{[0]})_{4(0)}))_{(-1)1}))S_H(R(S_C(R(c_{[0]})_{4(0)}))_{(-1)2})B(R(S_C(R(c_{[0]})_{4(0)}))_{(0)[1]})\\
&\o R(c_{[0]})_{3}\o R(c_{[0]})_{1(0)[1]2}R(c_{[0]})_{2[1]}R(c_{[0]})_{4(-1)}\\
&=R(c_{[0]})_{1[0](0)[0]}R(R(c_{[0]})_{2[0]})_{(0)[0]}R(S_C(R(c_{[0]})_{4(0)}))_{(0)[0]}\\
&\o B(S_H(c_{[1]}))S_H(R(c_{[0]})_{1[0](-1)2})B(S_H(R(c_{[0]})_{1[0](-1)1}))B(R(c_{[0]})_{1[0](0)[1]})\\
&B(S_H(R(R(c_{[0]})_{2[0]})_{(-1)1}))S_H(R(R(c_{[0]})_{2[0]})_{(-1)2})B(R(R(c_{[0]})_{2[0]})_{(0)[1]})\\
&B(S_H(R(S_C(R(c_{[0]})_{4(0)}))_{(-1)1}))S_H(R(S_C(R(c_{[0]})_{4(0)}))_{(-1)2})B(R(S_C(R(c_{[0]})_{4(0)}))_{(0)[1]})\\
&\o R(c_{[0]})_{3}\o R(c_{[0]})_{1[1]}R(c_{[0]})_{2[1]}R(c_{[0]})_{4(-1)}\\
&=R(c_{[0]})_{1[0]1(0)[0]}R(R(c_{[0]})_{1[0]2})_{(0)[0]}R(S_C(R(c_{[0]})_{3(0)}))_{(0)[0]}\\
&\o B(S_H(c_{[1]}))S_H(R(c_{[0]})_{1[0]1(-1)2})B(S_H(R(c_{[0]})_{1[0]1(-1)1}))B(R(c_{[0]})_{1[0]1(0)[1]})\\
&B(S_H(R(R(c_{[0]})_{1[0]2})_{(-1)1}))S_H(R(R(c_{[0]})_{1[0]2})_{(-1)2})B(R(R(c_{[0]})_{1[0]2})_{(0)[1]})\\
&B(S_H(R(S_C(R(c_{[0]})_{3(0)}))_{(-1)1}))S_H(R(S_C(R(c_{[0]})_{3(0)}))_{(-1)2})B(R(S_C(R(c_{[0]})_{3(0)}))_{(0)[1]})\\
&\o R(c_{[0]})_{2}\o R(c_{[0]})_{1[1]}R(c_{[0]})_{3(-1)}\\
&=[R(c_{[0]})_{1[0]1}R(R(c_{[0]})_{1[0]2})R(S_C(R(c_{[0]})_{3(0)}))]_{(0)[0]}\\
&\o B(S_H(c_{[1]}))B(S_H[R(c_{[0]})_{1[0]1}R(R(c_{[0]})_{1[0]2}R(S_C(R(c_{[0]})_{3(0)})))]_{(-1)1})\\
&S_H[R(c_{[0]})_{1[0]1}R(R(c_{[0]})_{1[0]2})R(S_C(R(c_{[0]})_{3(0)}))]_{(-1)2})\\
&B[R(c_{[0]})_{1[0]1}R(R(c_{[0]})_{1[0]2}R(S_C(R(c_{[0]})_{3(0)})))]_{(0)[1]})\\
&\o R(c_{[0]})_{2}\o R(c_{[0]})_{1[1]}R(c_{[0]})_{3(-1)}\\
&=R(c_{[0]1(0)})_{(0)[0]}\o B(S_H(c_{[1]}))B(S_H(R(c_{[0]1(0)})_{(-1)1})S_H(R(c_{[0]1(0)})_{(-1)2})B(R(c_{[0]1(0)})_{(0)[1]})\\
&\o R(c_{[0]2})\o B(S_H(c_{[0]1(-1)}))\\
&=R(c_{[0]1(0)[0]})\o B(S_H(c_{[1]}))B(c_{[0]1(0)[1]})\o R(c_{[0]2})\o B(S_H(c_{[0]1(-1)}))\\
&=R(c_{1[0](0)})\o B(S_H(c_{1[1]2}c_{2[1]}))B(c_{1[1]1})\o R(c_{2[0]})\o B(S_H(c_{1[0](-1)}))\\
&=R(c_{1(0)})\o B(S_H(c_{2[1]}))\o R(c_{2[0]})\o B(S_H(c_{1(-1)})).
\end{align*}
The proof is completed.
\end{proof}

When the right $H$-coaction on $C$ is trivial, that is, $c_{[0]}\o c_{[1]}=c\o 1_H$, we could obtain the following results.
\begin{corollary}
Let $C$ and $H$ be two commutative Hopf algebra such that $(C,\r^l)$ is a left $H$-comodule Hopf algebra. Assume that $R:C\rightarrow C$ is a linear map and $B$ is a Rota-Baxter co-operator on $H$. Define a linear map $\widetilde{B}:C\ltimes H\rightarrow C\ltimes H$ by
$$\widetilde{B}(c\ltimes h)=R(c_{(0)})\ltimes B(c_{(-1)}h),$$
for all $c\in C,h\in H$. Then $\widetilde{B}$ is a Rota-Baxter co-operator on $C\ltimes H$ if and only if $R$ is a Rota-Baxter co-operator on $C$ and satisfies the following conditions
\begin{align}
&R(c)_{1}R(R(c)_{2}S_C(R(c)_{3(0)}))\o R(c)_2\o R(c)_{3(-1)}\nonumber\\
&\ =R(c_{1(0)})\o R(c_2)\o B(S_H(c_{1(-1)})),\label{3f}\\
&R(c)_{(-1)1}B(R(c)_{(-1)2}\o R(c)_{(0)}=1_H\o R(c).\label{3g}
\end{align}
\end{corollary}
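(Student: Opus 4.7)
The plan is to deduce the corollary as a direct specialization of Theorem \ref{3-1} to the case where the right $H$-coaction on $C$ is trivial, namely $c_{[0]} \otimes c_{[1]} = c \otimes 1_H$ for every $c \in C$. Under this hypothesis the $H$-bicomodule Hopf algebra structure collapses to a left $H$-comodule Hopf algebra structure, and the compatibilities (\ref{1c}) and (\ref{1d}) needed for $C \ltimes H$ to be a Hopf algebra hold for free (commutativity of $H$ takes care of the first, while the second becomes vacuous since the right coaction is trivial).

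First I would check that the map $\widetilde{B}$ appearing in the corollary coincides with the specialization of the one in Theorem \ref{3-1}. Substituting $c_{(0)[0]} = c_{(0)}$ and $c_{(0)[1]} = 1_H$ into $R(c_{(0)[0]}) \ltimes B(c_{(-1)}\, c_{(0)[1]}\, h)$ gives exactly $R(c_{(0)}) \ltimes B(c_{(-1)} h)$. Hence the two candidate Rota-Baxter co-operators agree, and it remains to translate the conditions (\ref{3c}) and (\ref{3d}) under this trivialization.

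Next I would simplify the two conditions. For (\ref{3c}), the factor $R(c)_{1[1]}$ equals $1_H$ and so drops from the third tensor slot, while $R(c)_{1[0]1}\otimes R(c)_{1[0]2} = (R(c)_1)_1 \otimes (R(c)_1)_2$; coassociativity then reindexes the result into (\ref{3f}). For (\ref{3d}), we have $R(c)_{[0]} = R(c)$, $R(c)_{[1]} = 1_H$, $c_{[0]} = c$, $c_{[1]} = 1_H$; using that $B$ is an algebra map (so $B(1_H) = 1_H$) and that $S_H(1_H) = 1_H$, both $B(S_H(R(c)_{[1]})) = 1_H$ and $B(S_H(c_{[1]})) = 1_H$, so (\ref{3d}) collapses directly to (\ref{3g}).

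The only delicate point is consistent Sweedler bookkeeping when splitting $R(c)_1$ via coassociativity in the simplification of (\ref{3c}); beyond that there is no new content, and both directions of the corollary follow immediately from the two directions of Theorem \ref{3-1}.
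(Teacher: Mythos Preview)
Your proposal is correct and follows exactly the paper's approach: the corollary is stated immediately after the remark ``When the right $H$-coaction on $C$ is trivial, that is, $c_{[0]}\otimes c_{[1]}=c\otimes 1_H$, we could obtain the following results,'' and no further argument is given. Your checks that $\widetilde{B}$, (\ref{3c}) and (\ref{3d}) specialize to the displayed map, (\ref{3f}) and (\ref{3g}) respectively are precisely what is needed, and in fact spell out more detail than the paper does.
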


In Theorem  \ref{3-1}, when the Rota-Baxter co-operator on $C$ and $H$ are replaced by $S_C$ and $S_H$ respectively, we could get the following results directly.
\begin{corollary}
Let $C$ and $H$ be two commutative Hopf algebra such that $(C,\r^l,\r^r)$ is an $H$-bicomodule Hopf algebra. Assume that $B$ is a Rota-Baxter co-operator on $H$. Define a linear map $\widetilde{B}:C\ltimes H\rightarrow C\ltimes H$ by
$$\widetilde{B}(c\ltimes h)=S_C(c_{(0)[0]})\ltimes B(c_{(-1)}c_{(0)[1]}h),$$
for all $c\in C,h\in H$. Then $\widetilde{B}$ is a Rota-Baxter co-operator on $C\ltimes H$ if and only if $R$ is a Rota-Baxter co-operator on $C$ and satisfies the following conditions
\begin{align}
&S_C(c_{1(0)})\o c_{1(-1)}
=S_C(c_{1(0)})\o B(S_H(c_{1(-1)})),\label{3h}\\
&c_{(-1)1}B(c_{(-1)2})\o c_{(0)}=1_H\o c.\label{3i}
\end{align}
\end{corollary}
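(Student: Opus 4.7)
The statement is obtained by specialising Theorem~\ref{3-1} to $R=S_C$. Two tasks: (i) verify that $S_C$ is always a Rota-Baxter co-operator on $C$, so the ``$R$ Rota-Baxter'' premise of Theorem~\ref{3-1} is automatic; and (ii) show that conditions (\ref{3c}) and (\ref{3d}) with $R=S_C$ are equivalent to (\ref{3h}) and (\ref{3i}) respectively. The stray ``$R$'' in the corollary statement is a copy-paste artefact, since $\widetilde{B}$ is defined purely in terms of $S_C$.

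\textbf{Step (i).} Since $C$ is commutative, $S_C$ is an algebra morphism with $S_C^{2}=\mathrm{id}$, and the coalgebra antimorphism gives $\Delta^{3}(S_C(c))=S_C(c_{4})\otimes S_C(c_{3})\otimes S_C(c_{2})\otimes S_C(c_{1})$. Substituting $B=S_C$ into the right-hand side of (\ref{3a}) and telescoping using $S_C^{2}=\mathrm{id}$, commutativity, and the antipode axiom $x_{1}S_C(x_{2})=\varepsilon(x)1_{C}$ reduces it to $\sum_{c}S_C(c_{1})\otimes S_C(c_{2})$, matching the left-hand side.

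\textbf{Step (ii).} The $H$-bicolinearity of $S_C$ (built into the definition of $H$-bicomodule Hopf algebra) supplies $\rho^{r}(S_C(c))=S_C(c_{[0]})\otimes c_{[1]}$ and $\rho^{l}(S_C(c))=c_{(-1)}\otimes S_C(c_{(0)})$. Plugging $R=S_C$ into (\ref{3c}) and invoking these together with $S_C^{2}=\mathrm{id}$, commutativity, and the antipode axiom, the factor $S_C(c)_{1[0]1}S_C(S_C(c)_{1[0]2}S_C(S_C(c)_{3(0)}))$ telescopes to $\varepsilon(c_{3[0]})S_C(c_{1(0)})$; then $\varepsilon(c_{[0]})c_{[1]}=\varepsilon(c)1_{H}$ eliminates the $c_{3}$-slot, yielding the three-factor form of (\ref{3h}), equivalent to the two-factor version by tensoring in or cancelling the middle $S_C(c_{2})$ via the counit. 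For (\ref{3d}), after applying $\mathrm{id}\otimes S_C$ and using the bicomodule axiom $c_{[0](-1)}\otimes c_{[0](0)}\otimes c_{[1]}=c_{(-1)}\otimes c_{(0)[0]}\otimes c_{(0)[1]}$, the identity rewrites as $\sum_{c}\Psi(c_{(-1)})\,B(S_{H}(c_{(0)[1]}))\otimes c_{(0)[0]}=0$, where $\Psi(h):=h_{1}B(h_{2})-\varepsilon(h)1_{H}$. This is equivalent to $\sum_{c}\Psi(c_{(-1)})\otimes c_{(0)}=0$, i.e.\ to (\ref{3i}), because the linear map $M:H\otimes C\to H\otimes C$, $y\otimes z\mapsto y\,B(S_{H}(z_{[1]}))\otimes z_{[0]}$, is a bijection with inverse $y\otimes z\mapsto y\,B(z_{[1]})\otimes z_{[0]}$: commutativity of $H$ together with the algebra-map property of $B$ yields $B(z_{[1]2})\,B(S_{H}(z_{[1]1}))=\varepsilon(z_{[1]})1_{H}$, from which $M\circ M^{-1}=\mathrm{id}$ follows via the counit of $\rho^{r}$.

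\textbf{Main obstacle.} The subtle step is the ``only if'' direction in the reduction of (\ref{3d}) to (\ref{3i}): it is not a direct substitution but rests on the invertibility of the auxiliary map $M$, which crucially uses commutativity of $H$ and $B$ being an algebra map. The remainder is routine but notationally heavy Sweedler bookkeeping, since three comultiplications ($\Delta$, $\rho^{l}$, $\rho^{r}$) are interleaved and must be repeatedly rearranged through the bicomodule compatibility.
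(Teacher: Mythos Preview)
Your proposal is correct and follows the same approach the paper intends: specialise Theorem~\ref{3-1} to $R=S_C$. The paper itself offers no proof beyond the sentence ``we could get the following results directly,'' so your argument supplies the missing details. In particular, your verification that $S_C$ is a Rota-Baxter co-operator and your reduction of (\ref{3c}) to (\ref{3h}) via the telescoping computation are correct. The key step you flag --- that passing from (\ref{3d}) with $R=S_C$ back to (\ref{3i}) requires inverting the auxiliary map $M(y\otimes z)=y\,B(S_H(z_{[1]}))\otimes z_{[0]}$ --- is genuine and is not addressed in the paper; your inverse $y\otimes z\mapsto y\,B(z_{[1]})\otimes z_{[0]}$ works exactly as you say, using coassociativity of $\rho^r$, the algebra-map property of $B$, and commutativity of $H$. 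You are also right that the dangling ``$R$'' in the corollary statement and the stray $c_1$ subscripts in (\ref{3h}) are artefacts; the intended two-tensor condition is $S_C(c_{(0)})\otimes c_{(-1)}=S_C(c_{(0)})\otimes B(S_H(c_{(-1)}))$, which is what your reduction yields after applying $\mathrm{id}\otimes\varepsilon\otimes\mathrm{id}$ and is equivalent to the three-tensor form by the argument you sketch.
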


\begin{corollary}
Let $C$ and $H$ be two commutative Hopf algebra such that $(C,\r^l,\r^r)$ is an $H$-bicomodule Hopf algebra. Assume that $R:C\rightarrow C$ is a linear map. Define a linear map $\widetilde{B}:C\ltimes H\rightarrow C\ltimes H$ by
$$\widetilde{B}(c\ltimes h)=R(c_{(0)[0]})\ltimes S_H(c_{(-1)}c_{(0)[1]}h),$$
for all $c\in C,h\in H$. Then $\widetilde{B}$ is a Rota-Baxter co-operator on $C\ltimes H$ if and only if $R$ is a Rota-Baxter co-operator on $C$ and satisfies the following conditions
\begin{align}
&R(c)_{1[0]1}R(R(c)_{1[0]2}S_C(R(c)_{3(0)}))\o R(c)_2\o R(c)_{1[1]}R(c)_{3(-1)}\nonumber\\
&\ =R(c_{1(0)})\o R(c_2)\o c_{1(-1)},\label{3j}\\
&R(c)_{[1]}\o R(c)_{[0]}=c_{[1]}\o R(c_{[0]}).\label{3k}
\end{align}
\end{corollary}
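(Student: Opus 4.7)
The plan is to derive this corollary as a direct specialization of Theorem~\ref{3-1} with the map $B$ replaced by the antipode $S_H$. First I would verify that on any commutative Hopf algebra $(H, S_H)$ the antipode itself is a Rota-Baxter co-operator on $H$: since $H$ is commutative, $S_H$ is an algebra morphism and $S_H^2 = \mathrm{id}_H$, and a short check reduces the defining identity $B(x_1)\otimes B(x_2) = B(x)_1 B(B(x)_2 S(B(x)_4)) \otimes B(x)_3$ with $B = S_H$ to the trivial identity $S_H(x_1)\otimes S_H(x_2) = S_H(x)_1 \otimes S_H(x)_2$. With this in hand, the formula $\widetilde{B}(c\ltimes h) = R(c_{(0)[0]})\ltimes S_H(c_{(-1)}c_{(0)[1]}h)$ is precisely the map produced by Theorem~\ref{3-1} applied to $B = S_H$, so $\widetilde{B}$ is a Rota-Baxter co-operator on $C\ltimes H$ if and only if $R$ is a Rota-Baxter co-operator on $C$ and the specialized forms of conditions~(\ref{3c}) and (\ref{3d}) hold.

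Next I would simplify these two specialized conditions. For (\ref{3c}), the only change occurs in the third tensor factor, where $B(S_H(c_{1(-1)}))$ becomes $S_H^2(c_{1(-1)}) = c_{1(-1)}$; the first two tensor factors are unchanged, so (\ref{3c}) collapses verbatim to~(\ref{3j}). For (\ref{3d}), I would use the anti-multiplicativity of $S_H$ together with the commutativity of $H$ to rewrite
$$S_H\bigl(R(c)_{[0](-1)2}\,S_H(R(c)_{[1]})\bigr) = R(c)_{[1]}\,S_H(R(c)_{[0](-1)2}),$$
then apply the antipode axiom to $R(c)_{[0](-1)1}\,S_H(R(c)_{[0](-1)2})$ so that the left-hand side reduces to $R(c)_{[1]}\otimes R(c)_{[0]}$. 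The right-hand side $S_H(S_H(c_{[1]}))\otimes R(c_{[0]})$ similarly reduces to $c_{[1]}\otimes R(c_{[0]})$ via $S_H^2 = \mathrm{id}$, yielding precisely~(\ref{3k}).

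The argument is entirely routine and presents no real obstacle beyond careful bookkeeping of antipodes and systematic use of the commutativity assumption. The one observation that must be made at the outset is that $S_H$ on a commutative Hopf algebra genuinely satisfies Definition~3.4, which is why commutativity of $H$ (and hence $S_H^2 = \mathrm{id}_H$) is essential for the specialization to go through.
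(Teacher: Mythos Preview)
Your proposal is correct and follows exactly the route the paper intends: the paper states just before this corollary that the result is obtained ``directly'' from Theorem~\ref{3-1} by replacing the Rota-Baxter co-operator $B$ on $H$ with $S_H$, and you have filled in the routine verifications (that $S_H$ is a Rota-Baxter co-operator on a commutative Hopf algebra, and the simplification of (\ref{3c}) and (\ref{3d}) via $S_H^2=\mathrm{id}_H$) that the paper omits.
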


\begin{example}
Let $C_2$ and $C_3$ be the cyclic groups given in Example \ref{2-6}. Denote $k^{C_2}$ and $k^{C_3}$ the linear dual of $k[C_2]$ and $k[C_3]$, respectively. Then $k^{C_2}$ has a basis $p_1,p_g$ dual to $1,g$, and $k^{C_3}$ has a basis $q_1,q_h,q_{h^2}$ dual to $1,h,h^2$. Define the $k^{C_2}$-bicomodule action on $k^{C_3}$ by
\begin{align*}
&q_{1(-1)}\o q_{1(0)}=(p_1+p_g)\o q_1,\ q_{1[0]}\o q_{1[1]}=q_1\o (p_1+p_g),\\
&q_{h(-1)}\o q_{h(0)}=(p_1+p_g)\o q_h,\ q_{h[0]}\o q_{h[1]}=q_h\o p_1+q_{h^2}\o p_g,\\
&q_{h^2(-1)}\o q_{h^2(0)}=(p_1+p_g)\o q_{h^2},\ q_{h^2[0]}\o q_{h^2[1]}=q_{h^2}\o p_1+q_{h}\o p_g,
\end{align*}
then $k^{C_3}$ is a $k^{C_2}$-bicomodule coalgebra.
\begin{itemize}
  \item [(1)] Define $R:k^{C_3}\rightarrow k^{C_3}$ by $R(x)=x$ for all $x\in k^{C_3}$. Then
$$\widetilde{B}(q_{h^i}\ltimes p_{g^j})=1\ltimes p_{g^j},\ i=0,1,2;j=0,1.$$
is a Rota-Baxter co-operator on $k^{C_3}\ltimes k^{C_2}$.
  \item [(2)] Define $R:k^{C_3}\rightarrow k^{C_3}$ by 
$$R(q_1)=q_1,R(q_h)=q_{h^2},R(q_{h^2})=q_h.$$
Then $\widetilde{B}$ is the identity map on $k^{C_3}\ltimes k^{C_2}$, which is a Rota-Baxter co-operator since $k^{C_3}\ltimes k^{C_2}$ is cocommutative.

\end{itemize}
\end{example}

\section*{Acknowledgement}

This work was supported by the NNSF of China (Nos. 12271292, 11901240).

\end{document}